\documentclass[11pt]{article}
\usepackage[margin=1in]{geometry}
\usepackage{amsmath,amssymb,amsthm,mathtools}
\usepackage{booktabs}

\usepackage{tikz}
\usepackage{hyperref}
\hypersetup{hidelinks}
\usepackage{microtype}
\usepackage{enumitem}
\theoremstyle{plain}
\newtheorem{theorem}{Theorem}[section]
\newtheorem{lemma}[theorem]{Lemma}

\theoremstyle{definition}

\theoremstyle{remark}

\newcommand{\AT}{\operatorname{AT}}

\usepackage{xcolor}
\begin{document}
\title{The Alon-Tarsi number of 4-regular circulant graphs}
\author{Xinbo Xu$^1$ \and Xuding Zhu$^1$\thanks{ Grant number:  NSFC 12371359.} }
\date{%
    $^1$School of Mathematical Sciences, Zhejiang Normal University\\[2ex]%
    \today
}
\maketitle
\begin{abstract}
This paper studies the Alon-Tarsi number of connected $4$-regular circulant graphs 
$C_n(s,t)$, where $1\le s<t<n/2$.
We prove that if $n=\ell p^r$, where $p$ is an odd prime, $r \ge 1$,
and $t\equiv\pm s\pmod{\ell}$, then 
 $\operatorname{AT}(C_n(s,t))=3$
if $\ell$ is even, or if $\ell \ge 3$ is odd and
$p\nmid 2^\ell-2$.
  We also prove that $\AT(C_{6m}(2,3))=3$ for  $m\ge2$, and
$\operatorname{AT}(C_n(1,3))=3$ for   $n\ge8$.

\textbf{Keywords}\ {Alon–Tarsi number, \ circulant graph, \ graph polynomial,\ list colouring\ }
\end{abstract}
\section{Introduction}

 The Alon-Tarsi number   is a graph invariant defined through graph polynomial.
  For a graph $G$ with a fixed orientation, the graph polynomial $P_G(\mathbf x)$ is defined as 
 \begin{equation}\label{eq:gpoly}
P_G(\mathbf x)=\prod_{(u,v) \in E(G)}(x_u-x_v).
\end{equation}
For $  g\in {\mathbb Z}_{\ge 0}^{V(G)}$, let $\left[\prod_{v\in V(G)}x_v^{g(v)}\right]P_G(\mathbf x)$ be the coefficient of the monomial 
\[
 \prod_{v\in V(G)}x_v^{g(v)}
\]
when $P_G$ is written as a linear combination of monomials. If $  g(v)=k$ for every $v$, we write $\left[\prod_{v\in V(G)}x_v^{g(v)}\right]P_G(\mathbf x)$ as $\left[\prod_{v\in V(G)}x_v^{k}\right]P_G(\mathbf x)$.  We say the monomial $\prod_{v\in V(G)}x_v^{g(v)} $ is \emph{non-vanishing} in $P_G$ if $\left[\prod_{v\in V(G)}x_v^{g(v)}\right]P_G(\mathbf x) \ne 0$. 
The \emph{ Alon-Tarsi number} $\operatorname{AT}(G)$ of $G$ is the minimum integer $k$ such that $P_G$ has a non-vanishing monomial $\prod_{v\in V(G)}x_v^{g(v)} $ with $g(v) < k$ for all $v$. 

It follows from  the Combinatorial Nullstellensatz that $\operatorname{AT}(G)$ is an upper bound for the choice number $ch(G)$ of $G$ \cite{AlonTarsi1992}, as well as the on-line choice number $\chi_P(G)$ of $G$ \cite{Schauz2010}. 
 
 This paper studies the Alon-Tarsi number of  $4$-regular circulant graphs $C_n(s,t)$, which has   vertex set $\mathbb Z_n$, and in which each $i\in\mathbb Z_n$ is adjacent to $i\pm s$ and $i\pm t$ (modulo $n$). Without loss of generality, we may assume that $1 \le s < t < n/2$. We may assume that $C_n(s,t)$ is connected, which is equivalent to $\gcd(n,s,t)=1$. 
 
 The Alon-Tarsi numbers of $4$-regular graphs have been studied in many papers. For a $4$-regular graph $G$,  ${\rm deg}(P_G) = 2|V(G)|$. It follows that $\operatorname{AT}(G) \ge 3$. 
The Alon--Tarsi version of Brooks' theorem
\cite{HladkyKralSchauz2010} implies that every connected
$4$-regular graph $G$ other than $K_5$ satisfies
$\operatorname{AT}(G)\le4$. A natural problem is which $4$-regular graphs $G$ have $\operatorname{AT}(G) =3$.
 Fleischner and Stiebitz  \cite{FleischnerStiebitz1992} proved that if a 4-regular graph $G$ decomposes into a Hamilton cycle and vertex-disjoint triangles, then  $\operatorname{AT}(G)=3$. This confirms a long standing conjecture of Erd\H{o}s on the chromatic number of such graphs. 
Tarsi~\cite{Tarsi1999} proved that if a 4-regular graph $G$ has an odd number of $3$-colourings (up to permutations of colour names), then 
$\operatorname{AT}(G)=3$.  In \cite{AT-ToroidalGrids2023}, Li,  Shao,  Petrov  and Gordeev determined the Alon-Tarsi numbers of the Cartesian products $C_n \Box C_m$, which are 4-regular Cayley graphs:   $AT(C_n \Box C_m)=4$ if both $n,m$ are odd, and $AT(C_n \Box C_m)=3$ otherwise. In \cite[Problem~2.1]{CyclesColourings2025},     Zden\v{e}k Dvo\v{r}\'{a}k asked whether a random $4$-regular graph almost surely has Alon-Tarsi number 3, and the question remains open.

The Alon--Tarsi number of $4$-regular circulant graphs are also studied in the literature. 
Li, Ye and Shao~\cite{LiYeShao2023} determined the Alon--Tarsi number of a family of $4$-regular planar graphs
$R_m$. The graph  $R_m$ defined in \cite{LiYeShao2023} is isomorphic to $C_{2m}(1,2)$. They proved that for $m \ge 3$,
\[
\operatorname{AT}(C_{2m}(1,2))
=
\begin{cases}
3,&3\mid m,\\
4,&3\nmid m.
\end{cases}
\]

We remark that  colouring and fractional colouring of circulant graphs  are related to the  classical Motzkin problem  \cite{CLZ1999}, and the famous``lonely runner" conjecture \cite{Lonelyrunner2025}, and has been studied extensively in the literature. For $4$-regular circulant graphs, Heuberger \cite{Heubergerchromaticofcirculant} determined their chromatic numbers. Their choice numbers and fractional chromatic numbers remain undetermined in general, although for many special families, these values are determined.

Let $G=C_n(s,t)$, and let 
\begin{equation}\label{eq:cyclepoly}
P_G(\mathbf x)=\prod_{i=0}^{n-1}(x_i-x_{i+s})(x_i-x_{i+t}),
\end{equation}
where the calculation in the indices are modulo $n$.

The following are the main results of this paper:

\begin{theorem}\label{thm:prime}
If $n=\ell p^r$, where $p$ is an odd prime and $\ell \ge 3$ is odd, $r \ge 1$,
and $t\equiv\pm s\pmod{\ell}$, then
\begin{equation}\label{eq:congruence}
\left[\prod_{i=0}^{n-1}x_i^2\right]P_{C_n(s,t)}(\mathbf x)
\equiv
\begin{cases}
2^\ell-2,&t\equiv-s\pmod\ell,\\
2-2^\ell,&t\equiv s\pmod\ell
\end{cases}
\quad(\bmod p).
\end{equation}
If $p\nmid 2^\ell-2$, then $\AT(C_n(s,t))=3$.
\end{theorem}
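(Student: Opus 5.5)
The plan is to compute the coefficient of $\prod_i x_i^2$ modulo $p$ through a symmetry argument. The cyclic group $\mathbb Z_n$ acts on monomials by index shift, and the subgroup $H=\langle \ell\rangle\cong\mathbb Z_{p^r}$ of shifts by multiples of $\ell$ is a $p$-group. Write $P_G$ as a sum over orientations: for each edge $\{i,i+s\}$ (resp.\ $\{i,i+t\}$) choose which endpoint receives the variable. The coefficient of $\prod x_i^2$ is then the signed count $\sum_{D}\sigma(D)$ over orientations $D$ of $G$ in which every vertex has in-degree (equivalently out-degree) exactly $2$. Here $\sigma(D)=\pm1$ according to the parity of the number of edges reversed relative to the reference orientation $i\to i+s$, $i\to i+t$. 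Shifting by an element of $H$ maps such orientations to such orientations and preserves the sign, since the reference orientation is shift-invariant. So orbits of size divisible by $p$ contribute $0$ mod $p$, and the coefficient is congruent mod $p$ to the signed count of $H$-invariant balanced orientations.

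Next, identify the $H$-invariant orientations. An orientation invariant under shift by $\ell$ is determined by its restriction to edge classes modulo $\ell$. This means it descends to an orientation of the quotient multigraph $G/H$ on vertex set $\mathbb Z_\ell$, in which $i$ is joined to $i+s$ and to $i+t$ (mod $\ell$). The in-degree condition is likewise checked on the quotient. There is one subtlety to verify: no edge is fixed by $H$ in a way that would force both directions. An edge $\{i,i+s\}$ is mapped to itself reversed only if $2s\equiv0$ modulo a multiple of $\ell$ in a way compatible with $H$, which cannot happen since $n$ is odd. The sign of the lifted orientation is $\sigma_{\text{quot}}^{p^r}=\sigma_{\text{quot}}$, because $p^r$ is odd. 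Thus the coefficient is congruent mod $p$ to the signed count of Eulerian-type (in-degree $2$) orientations of the quotient, which is $[\prod_{i\in\mathbb Z_\ell} x_i^2]\prod_{i}(x_i-x_{i+s})(x_i-x_{i+t})$ computed in $\mathbb Z_\ell$.

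Now use $t\equiv\pm s\pmod\ell$. If $t\equiv s$, the quotient polynomial is $\prod_{i}(x_i-x_{i+s})^2$. If $t\equiv -s$, it is $\prod_i (x_i-x_{i+s})(x_i-x_{i-s}) = (-1)^\ell\prod_i(x_i-x_{i+s})^2 = -\prod_i(x_i-x_{i+s})^2$, since $\ell$ is odd. The map $i\mapsto i+s$ generates $\mathbb Z_\ell$ when $\gcd(s,\ell)=1$. If $\gcd(s,\ell)=d>1$, the quotient splits into $d$ cycles; this case must be checked separately, perhaps by using $\gcd(n,s,t)=1$ to rule it out or handle it. Assuming the map generates $\mathbb Z_\ell$, relabel to get $Q=\prod_{j=0}^{\ell-1}(y_j-y_{j+1})^2$ on a cycle $C_\ell$.

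The remaining computation is $[\prod y_j^2]Q$. Each factor $(y_j-y_{j+1})^2=y_j^2-2y_jy_{j+1}+y_{j+1}^2$, so we need $\sum$ over choices where each vertex gets total degree $2$. The choices are: all edges give "$y_j^2$" (all forward), giving $1$; all give $y_{j+1}^2$, giving $1$; or all give the middle term, giving $(-2)^\ell=-2^\ell$. Mixed choices fail by a propagation argument around the cycle. This yields $2-2^\ell$ for $t\equiv s$ and $2^\ell-2$ for $t\equiv -s$, matching \eqref{eq:congruence}.

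Finally, if $p\nmid 2^\ell-2$, the coefficient of $\prod x_i^2$ is nonzero. So $\AT\le3$, and with the lower bound $\AT\ge3$ from the introduction, $\AT(C_n(s,t))=3$.

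The main obstacle I expect is making the orbit/quotient correspondence rigorous, in particular the sign bookkeeping and the case $\gcd(s,\ell)>1$.
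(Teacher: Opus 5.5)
Your argument is the paper's proof, phrased in orientation language. The shift by $\ell$ generates a $p$-group that acts on balanced selections and preserves signs, so only shift-invariant selections survive mod $p$. These correspond bijectively to balanced selections of the quotient polynomial on $\mathbb Z_\ell$, with the same sign because $p^r$ is odd. The doubled cycle then contributes $2+(-2)^\ell$, with an extra factor $(-1)^\ell=-1$ when $t\equiv -s$.

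The one step you leave open is the case $\gcd(s,\ell)=d>1$. It cannot be ``handled'' separately; it must be excluded, which the paper does in one line using its standing connectivity assumption $\gcd(n,s,t)=1$. Indeed $d\mid\ell\mid n$ and $d\mid s$, and $d\mid t$ because $\ell\mid t\mp s$; hence $d\mid\gcd(n,s,t)=1$.

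This use of connectivity is genuinely needed, since the congruence fails for disconnected graphs. For example, $C_{45}(3,12)$ satisfies the hypotheses with $\ell=9$, $p=5$. Your own reduction turns its quotient into three doubled triangles, so the coefficient is $\equiv(2-2^3)^3\equiv 4\pmod 5$. The claimed value is $2-2^9\equiv 0\pmod 5$. With that line added, your proof is complete.
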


\begin{theorem}\label{thm:c6m}
For every integer $m\ge2$, $\AT(C_{6m}(2,3))=3$.
\end{theorem}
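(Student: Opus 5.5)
The plan is to prove the upper bound $\AT(C_{6m}(2,3))\le 3$; the lower bound $\AT\ge 3$ holds for every $4$-regular graph, as noted in the introduction. For the upper bound I would use Tarsi's criterion \cite{Tarsi1999}: a $4$-regular graph $G$ with an odd number of proper $3$-colourings, counted up to permutations of the colour names, satisfies $\AT(G)=3$. Behind this is the fact that $[\prod_v x_v^2]P_G$ is congruent, modulo a suitable power of $2$, to a multiple of the number of $3$-colourings. So it suffices to show that $\chi(G,3)/6$ is odd, where $\chi(G,3)$ is the number of proper $3$-colourings of $G=C_{6m}(2,3)$. This count is a trace of a transfer-matrix power, and its parity can be checked by a periodicity argument.

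\emph{Step 1 (transfer matrix).} A map $c:\mathbb Z_{6m}\to\{0,1,2\}$ is proper iff $c(i)\ne c(i+2)$ and $c(i)\ne c(i+3)$ for all $i$. Take as states the triples $(a,b,d)=(c(i),c(i+1),c(i+2))$ with $a\ne d$; there are $18$ of them. There is a transition $(a,b,d)\to(b,d,e)$ exactly when $e\ne b$ and $e\ne a$. These conditions encode every edge $\{i,i+2\}$ and $\{i,i+3\}$ exactly once, and for $n=6m\ge 12$ no wrap-around coincidences occur. Hence $\chi(G,3)=\operatorname{tr}(T^{6m})$, where $T$ is the $18\times 18$ $0/1$ matrix of this digraph.

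\emph{Step 2 (reduction).} The symmetric group $S_3$ acts freely on proper colourings, so $\chi(G,3)=6N$, and we need $N$ odd. It is therefore enough to know $\operatorname{tr}(T^{6m})$ modulo $12$. I would use the $S_3$-symmetry to block-decompose $T$ and reduce its size: pass to the quotient by colour permutations, or split off the isotypic components. After that, $\operatorname{tr}(T^k)\bmod 12$ is an eventually periodic sequence in $k$. I would compute the characteristic polynomial of the reduced matrix over $\mathbb Z$, find the period of the sequence $\operatorname{tr}(T^{6m})\bmod 12$ in $m$, and verify $\operatorname{tr}(T^{6m})\equiv 6\pmod{12}$ over one full period starting at $m=2$.

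\emph{Main obstacle and fallback.} The delicate point is Step 2. The residue of $\operatorname{tr}(T^{6m})$ modulo $12$ must genuinely be $6$ for every $m\ge 2$. Tarsi's criterion also needs the parity of colourings "up to permutation", so one has to confirm that it is applied with the right normalisation. If the count turns out to be even for some residue class of $m$, I would fall back to computing $[\prod x_i^2]P_G$ directly. Expanding \eqref{eq:cyclepoly}, this coefficient is a signed count of Eulerian orientations, i.e.\ orientations with all out-degrees $2$. I would encode that count by a second transfer matrix over the edge orientations crossing a window of width $3$, and show that it is nonzero by exhibiting a nonzero residue modulo a suitable prime, as in Theorem~\ref{thm:prime}. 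A further option is to choose a different monomial with all exponents at most $2$, for instance one obtained by periodically shifting exponents within blocks of $6$. This would let the factor $6\mid n$ be exploited through a block of period $6$.
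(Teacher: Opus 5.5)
There is a genuine gap: the claim your Step 2 rests on, $\operatorname{tr}(T^{6m})\equiv 6\pmod{12}$ for all $m\ge2$, is false, and it already fails at $m=2$.

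In your $18$-state digraph, a state $(a,a,d)$ has two successors, $(a,d,d)$ and $(a,d,f)$, where $\{a,d,f\}=\{0,1,2\}$. Every other state has exactly one successor. So a proper $3$-colouring $c$ of $C_n(2,3)$ is determined by the positions $j$ with $c(j)=c(j+1)$, together with the colours at one of them. Consecutive such positions are at distance $2$ or $3$. A step of length $2$ turns the window $aad$ into $ddf$, and a step of length $3$ turns it into $ffa$. These act on $(a,d,f)$ as mutually inverse $3$-cycles, so the colouring closes up if and only if the numbers of the two kinds of steps agree modulo $3$. For $n=6m$ this gives
\[
\frac{N_3(C_{6m}(2,3))}{6}=\sum_{0\le b\le m/3}\frac{2m}{m-b}\binom{3m-3b}{6b},
\]
which equals $2,5,226,2312$ for $m=2,3,4,5$. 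In particular, the only proper $3$-colourings of $C_{12}(2,3)$ are the $6$ colourings $aaddffaaddff$ and their $6$ shifts by one position. Hence $\operatorname{tr}(T^{12})=12\equiv 0\pmod{12}$, and Tarsi's criterion says nothing for $m=2,4,5$. The coefficient is still nonzero there (for $m=2$ it equals $6u_{12}=12$ in the paper's notation), but the parity of the number of colourings cannot detect this.

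Everything therefore rests on your fallback, and there it is only a plan. What is needed is exactly the content of the paper's proof, in three pieces:
\begin{enumerate}
\item a weighted transfer matrix whose trace is the coefficient $\bigl[\prod x_i^2\bigr]P_G$ itself, not a colouring count. The paper gets one from Lemma~\ref{lem:coefficient} with $S=\{0,1,2\}$, giving the arc $(a_0,a_1,a_2)\to(a_1,a_2,b_2)$ the weight $(a_0-b_2)(a_1-b_2)/\delta(b_2)$;
\item an explicit annihilating polynomial, namely $A^9-3A^4+A^3+I_{18}=0$, which yields an integer recurrence for $u_j=\operatorname{tr}(A^j)/6$;
\item a modulus in which $u_{6m}$ is periodic and never zero.
\end{enumerate}
The choice of modulus is not routine. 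The coefficient is always divisible by $6$, so a prime modulus for it would have to be at least $5$. The quotient $u_{6m}$ vanishes modulo $2$ at $m=2$ and modulo $3$ at $m=4$ (where $u_{24}=-222$). This is why the paper works modulo $9$: there $u_{3k}\bmod 9$ has period $24$ and is nonzero at every even $k$. Your Eulerian-orientation transfer matrix could in principle replace $A$. But until such a computation is carried out and a working modulus is exhibited, the proposal does not prove the theorem.
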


\begin{theorem}\label{thm:c13}
For every integer $n\ge7$,  
\[
\left[\prod_{i=0}^{n-1}x_i^2\right]P_{C_n(1,3)}
=
N_3(C_n(1,3)),
\]
where for a graph $G$, $N_3(G)$ is the number of $3$-colourings of 
$G$ with labelled colours.
Consequently,
$\operatorname{AT}(C_n(1,3))=3$ for every $n\ge8$.
\end{theorem}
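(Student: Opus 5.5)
The plan is to compute the coefficient by the standard Lagrange/Nullstellensatz interpolation over the cube roots of unity. Let $\omega=e^{2\pi i/3}$ and $\Omega=\{1,\omega,\omega^2\}$, and put $\varphi(x)=x^3-1$, so that $\varphi'(c)=3c^2$. Since $\deg P_{C_n(1,3)}=2n$, the coefficient of the top monomial $\prod x_i^2$ is given by
\[
\Bigl[\prod_{i} x_i^2\Bigr]P=\sum_{c\in\Omega^n}\frac{P(c)}{\prod_i 3c_i^2}=3^{-n}\sum_{c\in\Omega^n}P(c)\prod_i c_i .
\]
Only proper colourings $c:\mathbb Z_n\to\Omega$ give a nonzero term. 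So the theorem reduces to showing that every proper $3$-colouring $c$ satisfies
\[
P(c)\prod_i c_i=3^n .
\]
Then the sum equals $3^n N_3(C_n(1,3))$, and the identity follows.

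To evaluate a single term, write $c_i=\omega^{a_i}$ with $a_i\in\mathbb Z_3$. Each factor of $P(c)$ has the form $x_i-x_{i+k}$ with $k\in\{1,3\}$, and equals $\omega^{a_i}(1-\omega^{e})$ where $e=a_{i+k}-a_i\in\{\pm1\}$. Using $1-\omega^2=-\omega^2(1-\omega)$ and $(1-\omega)^2=-3\omega$, the product of all $2n$ factors becomes $3^n$ times a sign times a power of $\omega$. The sign and the exponent of $\omega$ are explicit functions of $\sum_i a_i$ and of the number $b$ of edges with difference $-1$. Together with the factor $\prod c_i=\omega^{\sum a_i}$, the task is to show that this sign and power of $\omega$ are trivial.

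For this we need the structure of the $3$-colourings of $C_n(1,3)$. Set $\delta_i=a_{i+1}-a_i\in\{\pm1\}$. Properness on the edges $\{i,i+3\}$ says exactly that $\delta_i+\delta_{i+1}+\delta_{i+2}\not\equiv0 \pmod 3$, i.e.\ $\delta_i,\delta_{i+1},\delta_{i+2}$ are not all equal. Closing up the cycle requires $\sum_i\delta_i\equiv0\pmod 3$. So proper colourings correspond to a choice of $a_0$ together with a cyclic $\pm1$ word with runs of length $1$ or $2$ and total sum divisible by $3$. On the length-$3$ edges the difference $a_{i+3}-a_i$ is $\pm1$, determined by the majority sign of $\delta_i\delta_{i+1}\delta_{i+2}$.

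Next I would express $b$ (counting both edge types), and $\sum_i a_i$, in terms of the run structure of $\delta$. Then I would check that the total phase $(-1)^{\cdot}\omega^{\cdot}$ equals $1$. Independence from $a_0$ is immediate, because shifting all $a_i$ multiplies $\prod c_i$ by $\omega^{n}$ and the edge factors by $\omega^{2n}$, whose product is $\omega^{3n}=1$. The main obstacle is this bookkeeping: verifying the identity uniformly over all admissible words. I would attempt it by a local move argument. Any admissible cyclic word can be transformed into any other by replacing a block (for instance $++\,--$ versus $+-+-$, or inserting/removing a block $+-$ or $++--$ of zero sum, and rotations), and each move can be checked to change the phase by a factor of $1$. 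The case $n\ge7$ is where such moves have room to act without interacting with themselves around the cycle.

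Finally, for the consequence, it suffices to show $N_3(C_n(1,3))>0$ for $n\ge8$; then $\AT\le3$, and $\AT\ge3$ holds for every $4$-regular graph. This needs, for each $n\ge8$, a cyclic $\pm1$ word of length $n$ with runs of length $1$ or $2$ and sum $\equiv0\pmod3$. Take $a$ runs of length $2$ and $b$ runs of length $1$, alternating in sign, with an even number of runs. Adjusting the placement of these runs achieves every residue of the sum for $n\ge8$, for example by starting from $(++--)$-blocks and $(+-)$-blocks and swapping in a pattern such as $++-++-\cdots$ where needed; this is a small finite case check modulo $6$.
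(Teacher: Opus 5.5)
Your reduction is exactly the paper's: interpolate on $S=\{1,\omega,\omega^2\}$, use $\prod_{a\in S\setminus\{z\}}(z-a)=3z^2$, and reduce everything to the claim that each proper colouring $c$ satisfies $P(c)\prod_i c_i=3^n$. But that claim is the entire content of the theorem, and your proposal does not prove it. The local-move argument you defer to cannot work as described. Every move you list (swapping $++--$ and $+-+-$, inserting or deleting $+-$ or $++--$, rotating) preserves the integer $\sum_i\delta_i$. So, for example, $(+-)^9$ and $(++-)^6$, both admissible of length $18$ with sums $0$ and $6$, are never connected. The moves are also not always available: if $+-+-$ is preceded by $+$, replacing it by $++--$ creates the run $+++$. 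Insertions change $n$, so you would also have to track how the required phase depends on $n$. No connectivity statement is actually established. Finally, the remark that $n\ge7$ gives the moves ``room'' is a red herring: $n\ge7$ is only there so that $C_n(1,3)$ is a simple $4$-regular graph.

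The missing step is short, and your own set-up already contains it. Split $b=b_1+b_3$ according to the length of the factor. Since each vertex is the tail of exactly two factors, your normalisation gives $P(c)\prod_i c_i=3^n(-1)^{n+b}\omega^{n+2b}$, so you need $b\equiv n\pmod 6$. Your majority observation says that $\epsilon_i=a_{i+3}-a_i\in\{\pm1\}$ equals the integer $\delta_i+\delta_{i+1}+\delta_{i+2}$. Summing over $i$ gives $n-2b_3=3(n-2b_1)$, hence $b=4b_1-n$. The closing condition $n-2b_1\equiv0\pmod 3$ then gives $b-n=2(2b_1-n)\equiv0\pmod 6$. The paper packages the same fact multiplicatively. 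For $u_1,u_2,u_3,u_4\in S$ with cyclically consecutive entries distinct, $(u_1-u_2)(u_2-u_3)(u_3-u_4)=3u_2u_3(u_1-u_4)$. Together with $(x-y)^2=-3xy$ for distinct $x,y\in S$, this yields $L^3=3^nZ^2\prod_i(z_i-z_{i+3})$ and $L^2=(-3)^nZ^2$, where $L=\prod_i(z_i-z_{i+1})$ and $Z=\prod_i z_i$. Hence $P(c)=L^4/(3^nZ^2)=3^nZ^2$. For the consequence, replace the ``finite case check'' by explicit words, e.g.\ $(+-)^{n/2}$ for even $n$ and $(++-)^3(+-)^{(n-9)/2}$ for odd $n\ge9$. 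The paper uses the colourings $(01)^{n/2}$ and $(01)^k21202$.
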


\section{Proofs of the main results}\label{sec:circulants}

\subsection{The proof of Theorem~\ref{thm:prime}}\label{subsec:prime}

Let $\ell,p,r,n,s,t$ satisfy the hypotheses of Theorem~\ref{thm:prime},
and put $q=p^r$. Note that $\gcd(n,s,t)=1$ and $t \equiv \pm s \pmod{\ell}$ imply that 
$\gcd(s, \ell)=1$. To expand $P_G(\mathbf x)$ as a linear combination of monomials, we select one of $x_u$ and $-x_v$ from each term $(x_u-x_v)$ and multiply them to form a monomial. We record such a selection by a   matrix $B=(b_{i,d})\in\{0,1\}^{n\times2}$, 
whose rows are indexed by
$i\in\mathbb{Z}_n$ and   columns are indexed by  $s,t$. For $i \in \mathbb{Z}_n$ and $d \in \{s,t\}$,   $b_{i,d}=0$ (respectively, $b_{i,d}=1$) means we select $x_i$ (respectively, $-x_{i+d}$) from the term $(x_i-x_{i+d})$. 

Given such a matrix $B$, put
\[
h(B)=\sum_{i\in\mathbb{Z}_n}\sum_{d\in\{s,t\}}b_{i,d}
\]
and
\[
t_i^B
=
2+\sum_{d\in\{s,t\}}(b_{i-d,d}-b_{i,d})
\qquad(i\in\mathbb{Z}_n).
\]
The selection recorded by $B$ contributes 
$(-1)^{h(B)} \prod_{i=0}^{n-1} x_i^{t_i^B}$ to 
$P_{C_n(s,t)}(\mathbf x)$. Hence 
$$P_{C_n(s,t)}(\mathbf x) = \sum (-1)^{h(B)} \prod_{i=0}^{n-1} x_i^{t_i^B},$$
where the summation is over all matrices $B \in \{0,1\}^{n \times 2}$. 

We call $B$ \emph{admissible} if $t_i^B=2$ for $i\in\mathbb{Z}_n$. Note that $B$ is admissible if and only if
\begin{equation}\label{eq:balance}
b_{i,s}+b_{i,t}=b_{i-s,s}+b_{i-t,t}\qquad \forall i\in\mathbb{Z}_n.
\end{equation}

Let $\mathcal{B}$ be the family of admissible matrices $B$. Then 
$$\left[\prod_{i=0}^{n-1}x_i^2\right]P_{C_n(s,t)}(\mathbf x) = \sum_{B \in \mathcal{B}} (-1)^{h(B)}.$$

 Let $\tau$ be the action on $\mathcal{B}$   defined as  
$(\tau B)_{i,d}=b_{i-\ell,d}$, with row indices modulo $n$. Note that
Equation~\eqref{eq:balance} shows that $\tau B \in \mathcal{B}$   whenever
$B \in \mathcal{B}$, and $h(\tau B)=h(B)$. 

Since $\tau^{p^r}$ is the identity, each orbit of $\tau$ has size dividing $p^r$. Denote by $[B]$ the orbit of $\tau$ containing $B$. 
If $[B]$ is not a singleton orbit, then 
$p| |[B]|$, and  hence $\sum_{B' \in [B]} (-1)^{h(B')} \equiv 0 \pmod{p}$.  

Let $$\mathcal{B}' = \{B \in \mathcal{B}: | [B]| =1\}.$$
Then 
$$ \left[\prod_{i=0}^{n-1}x_i^2\right]P_{C_n(s,t)}(\mathbf x) \equiv \sum_{B \in \mathcal{B}'} (-1)^{h(B)} \pmod{p}.$$

 For each $B\in\mathcal B'$,
the condition $|[B]|=1$ (i.e., $\tau B=B$ ) implies that the rows
$j,j+\ell,\ldots,j+(q-1)\ell$ are identical.
We therefore compress $B$ to a matrix
$\tilde{B}$ formed by its first
$\ell$ rows, where \[
\widetilde B
=
(b_{j,d})_{\substack{0\le j<\ell\\ d\in\{s,t\}}}
.\]
Let $y_0,\ldots,y_{\ell-1}$ be variables, and put
\begin{equation}\label{eq:py}
P(y_0,\ldots,y_{\ell-1})
=
\prod_{j=0}^{\ell-1}(y_j-y_{j+s})(y_j-y_{j+t}),
\end{equation}
where calculations in indices are modulo $\ell$.

 Similarly to the discussion before, we have 

$$\left[\prod_{j=0}^{\ell-1}y_j^2\right]
P(y_0,\ldots,y_{\ell-1}) = \sum_{B \in\mathcal B'}(-1)^{h(\tilde{B})}$$
It follows from the definition of $\tilde{B}$ that  $h(B) = qh(\tilde{B})$. As $q$ is odd, 
$(-1)^{h(\tilde{B})} =(-1)^{h(B)}.$ 
Therefore  
$$\left[\prod_{i=0}^{n-1}x_i^2\right]P_{C_n(s,t)}(\mathbf x)\equiv\left[\prod_{j=0}^{\ell-1}y_j^2\right]
P(y_0,\ldots,y_{\ell-1}) = \sum_{B \in \mathcal{B}'} (-1)^{h(\tilde{B})}   \pmod{p}.$$

If $t \equiv -s \pmod{\ell}$, then $(y_{j-t}-y_j) = -(y_j - y_{j+s})$. By re-grouping the terms in ~\eqref{eq:py}, we have
\begin{equation}\label{eq:py2}
P(y_0,\ldots,y_{\ell-1})
=
(-1)^\ell \prod_{j=0}^{\ell-1}(y_j-y_{j+s})^2,
\end{equation}
Note that 
$$(y_j-y_{j+s})^2 = y_j^2- 2y_jy_{j+s} + y_{j+s}^2.$$
To expand $P(y_0,\ldots,y_{\ell-1})$, for each term $y_j^2- 2y_jy_{j+s} + y_{j+s}^2$, we need to choose one of the monomials to take part in the multiplication. 
For each $j$, let $a_j\in\{0,1,2\}$ be the exponent of
$y_j$ in the term selected from $y_j^2- 2y_jy_{j+s} + y_{j+s}^2$.
The total exponent of $y_j$ in the selected product is
$2-a_{j-s}+a_j$.
Since $(\ell, s) =1$, a selection contributes to $\prod_{j=0}^{\ell-1}y_j^2$
if and only if all the $a_j$ are equal.
The three constant choices contribute $1$, $(-2)^{\ell}$,
and $1$ to $\left[\prod_{j=0}^{\ell-1}y_j^2\right]
\prod_{j=0}^{\ell-1}(y_j-y_{j+s})^2$, respectively. Hence
\[
\left[\prod_{j=0}^{\ell-1}y_j^2\right]
\prod_{j=0}^{\ell-1}(y_j-y_{j+s})^2
=2+(-2)^{\ell},
\]
and 
\[
\left[\prod_{j=0}^{\ell-1}y_j^2\right]
P(y_0,\ldots,y_{\ell-1}) = 2^\ell-2.
\]

If $t \equiv s \pmod{\ell}$, then by the same calculation, we have
\[
\left[\prod_{j=0}^{\ell-1}y_j^2\right]
P(y_0,\ldots,y_{\ell-1}) = 2- 2^\ell.
\]
Hence, Equation (\ref{eq:congruence}) holds, and if  $p\nmid2^\ell-2$, then $\AT(C_n(s,t))=3$.
This completes the proof of Theorem \ref{thm:prime}.

Note that if $n$ is even and both $s,t$ are odd, then $C_n(s,t)$ is a bipartite graph. It is well-known \cite{ZhuBalakrishnan2022} that for any   bipartite graph $G$, $\AT(G) = \max\{ \lceil |E(H)|/|V(H)| \rceil +1: H \subseteq G\}$. Hence 4-regular bipartite graphs have Alon-Tarsi number $3$.  In particular, if $\ell$ is even and $\gcd (\ell, s) = \gcd(\ell, t)=1 $ and $n= \ell p^r$, then $\AT(C_n(s,t))=3$. 

\subsection{The proof of Theorem~\ref{thm:c6m}}
\label{subsec:c6m}

We compute the coefficient of $\prod_{i=0}^{n-1}x_i^2$
in $P_{C_n(2,3)}(\mathbf x)$ using the following well-known interpolation
formula~\cite{ZhuBalakrishnan2022}.

\begin{lemma}\label{lem:coefficient}
Let $\mathbb F$ be a field, and let
$f\in\mathbb F[x_0,\ldots,x_{n-1}]$ have degree at most
$\sum_{i=0}^{n-1}t_i$, where the $t_i$ are nonnegative integers.
For each $i$, let $S_i\subseteq\mathbb F$ have size $t_i+1$.
Put $\mathcal S=\prod_{i=0}^{n-1}S_i$ and, for
$a=(a_0,\ldots,a_{n-1})\in\mathcal S$, put
$$
N_{\mathcal S}(a)
=\prod_{i=0}^{n-1}\prod_{b\in S_i\setminus\{a_i\}}(a_i-b).
$$
Then
\begin{equation}\label{eq:interpolation}
[x_0^{t_0}\cdots x_{n-1}^{t_{n-1}}]f
=\sum_{a\in\mathcal S}\frac{f(a)}{N_{\mathcal S}(a)}.
\end{equation}
\end{lemma}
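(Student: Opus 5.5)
We prove Lemma~\ref{lem:coefficient} by reducing $f$ modulo the univariate polynomials $g_i(x_i)=\prod_{b\in S_i}(x_i-b)$. Each $g_i$ is monic of degree $t_i+1$, and the argument has three steps.

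\textbf{Step 1: reduction to low degree.} Repeatedly replace any occurrence of $x_i^{t_i+1}$ in a monomial by $x_i^{t_i+1}-g_i(x_i)$. That difference has degree at most $t_i$ in $x_i$. Each replacement lowers the total degree, so the process terminates. It produces a polynomial $\bar f$ with $\deg_{x_i}\bar f\le t_i$ for every $i$ and $\bar f(a)=f(a)$ for all $a\in\mathcal S$, since $g_i$ vanishes on $S_i$.

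We must check that the coefficient of $x_0^{t_0}\cdots x_{n-1}^{t_{n-1}}$ is unchanged. Every replacement acts on a monomial of degree $D\le\sum t_i$. The correction terms it introduces all have total degree strictly less than $D$. A monomial of degree less than $\sum t_i$ cannot be $\prod x_i^{t_i}$. The only way to produce that monomial is from a monomial of degree exactly $\sum t_i$ with every $\deg_{x_i}\le t_i$, and such a monomial is never touched by a replacement. So the target coefficient is the same in $f$ and in $\bar f$. This degree bookkeeping is the only delicate point, and the hypothesis $\deg f\le\sum t_i$ is exactly what makes it work.

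\textbf{Step 2: Lagrange interpolation.} For $a\in\mathcal S$, define
\[
L_a(\mathbf x)=\prod_{i=0}^{n-1}\prod_{b\in S_i\setminus\{a_i\}}\frac{x_i-b}{a_i-b}.
\]
Then $L_a$ has degree at most $t_i$ in $x_i$ and takes the value $1$ at $a$ and $0$ elsewhere on $\mathcal S$. The polynomial $\bar f-\sum_{a\in\mathcal S}f(a)L_a$ has $\deg_{x_i}\le t_i$ and vanishes on the grid $\mathcal S$. By the standard induction on $n$ (a nonzero univariate polynomial of degree at most $t_i$ has at most $t_i$ roots), it is the zero polynomial. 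Hence
\[
\bar f=\sum_{a\in\mathcal S} f(a)L_a.
\]

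\textbf{Step 3: reading off the coefficient.} The coefficient of $\prod x_i^{t_i}$ in $L_a$ is the product of the leading coefficients $\prod_{b\in S_i\setminus\{a_i\}}(a_i-b)^{-1}$ over all $i$, which equals $1/N_{\mathcal S}(a)$. Combining this with Steps 1 and 2 gives \eqref{eq:interpolation}.
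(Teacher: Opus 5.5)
Your proof is correct. The paper gives no proof of this lemma; it quotes it as a well-known interpolation formula from \cite{ZhuBalakrishnan2022}. Your argument is the standard proof of that formula: reduce $f$ modulo $g_i(x_i)=\prod_{b\in S_i}(x_i-b)$ without changing the coefficient of $\prod x_i^{t_i}$ (this is exactly where $\deg f\le\sum t_i$ enters), then expand the reduced polynomial in the grid Lagrange basis $L_a$ and read off the top coefficients.

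The one loose phrase is ``each replacement lowers the total degree.'' A replacement lowers the degree of the term it acts on, not the degree of the whole polynomial. Termination still holds: the vector counting offending monomials in each degree, read from the top degree down, decreases lexicographically at every step. Alternatively, you can cite multivariate division; either way this is routine.

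For comparison, a shorter route avoids Step 1 altogether. Both sides are linear in $f$, so take $f=\prod_i x_i^{\alpha_i}$ with $\sum\alpha_i\le\sum t_i$. The right-hand side then factors as $\prod_i\sum_{c\in S_i}c^{\alpha_i}/\prod_{b\in S_i\setminus\{c\}}(c-b)$. For $\alpha_i\le t_i$, the $i$-th factor is the coefficient of $x^{t_i}$ in the one-variable Lagrange interpolant of $x^{\alpha_i}$ on $S_i$. That interpolant is $x^{\alpha_i}$ itself, so the factor is $1$ if $\alpha_i=t_i$ and $0$ if $\alpha_i<t_i$. Since $\alpha\ne t$ forces some $\alpha_i<t_i$, both sides agree. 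That version gives up your explicit identity $\bar f=\sum_a f(a)L_a$ in exchange for a purely one-variable computation with no degree bookkeeping.
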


Let $n\ge7$ and $G=C_n(2,3)$.
We use the orientation in~\eqref{eq:cyclepoly}.
We apply Lemma~\ref{lem:coefficient} over $\mathbb Q$ with
$t_i=2$ and $S_i=S=\{0,1,2\}$ for every $i$.
For $d\in S$, define
\begin{equation}\label{eq:delta}
\delta(d)=\prod_{e\in S\setminus\{d\}}(d-e).
\end{equation}
Thus $\delta(0)=2$, $\delta(1)=-1$, and $\delta(2)=2$.
For a map $\varphi:V(G)\to S$, put
$D(\varphi)=\prod_{i=0}^{n-1}\delta(\varphi(i))$.
This is $N_{S^n}(a)$ with $a_i=\varphi(i)$.
Since $G$ is $4$-regular, $P_G$ has degree $2n$, and
Lemma~\ref{lem:coefficient} gives
\begin{equation}\label{eq:colouringsum}
\left[\prod_{i=0}^{n-1}x_i^2\right]P_G(\mathbf x)
=\sum_{\varphi:V(G)\to S}\frac{P_G(\varphi)}{D(\varphi)}
=\sum_{\varphi\text{ proper}}\frac{P_G(\varphi)}{D(\varphi)}.
\end{equation}
The second equality holds because $P_G(\varphi)=0$ if $\varphi$ is not a proper colouring.

In the  study of the Alon-Tarsi number of toroidal grids, Li, Shao, Petrov and Gordeev~\cite{AT-ToroidalGrids2023}
used the interpolation formula to express a coefficient as the trace
of a weighted colouring matrix. 
We use the same principle here.

For a proper $S$-colouring $\varphi$ of $G$, a {\em state} is a triple 
$$
\alpha_i(\varphi)
=\bigl(\varphi(i), \varphi({i+1}), \varphi({i+2})\bigr).
$$
As $\{i,i+2\}$ is an edge, each state $(a_0,a_1,a_2)$ satisfies $a_0 \ne a_2$.

Let
$$
\Omega=\{(a_0,a_1,a_2)\in S^3:a_0\ne a_2\}.
$$
Let $D$ be the arc-weighted directed graph with vertex set $\Omega$, in which $(\alpha, \beta)$ with $ \alpha = (a_0,a_1,a_2)$ and 
$  \beta =(b_0,b_1,b_2)$ is an arc if
  $b_0 = a_1, b_1=a_2 $
and $b_2 \notin \{a_0,a_1\}$. The weight of this arc is 
$$w(\alpha, \beta) = \dfrac{(a_0-b_2)(a_1-b_2)}{\delta(b_2)}.$$

Let $A$ be the adjacency matrix of $D$, with $A_{\alpha, \beta} = w(\alpha, \beta)$ if $(\alpha, \beta)$ is an arc, and $A_{\alpha, \beta} = 0$ otherwise.

We abbreviate $(a_0,a_1,a_2)$ to the three-symbol string $a_0a_1a_2$
and use the following order for both the rows and columns of $A$:
$$
\begin{gathered}
001,002,011,012,021,022,100,102,110,\\
112,120,122,200,201,210,211,220,221.
\end{gathered}
$$
For example,   $(001, 011)$ and $(001,012)$  are arcs of $D$ and 
$$
A_{001,011}=\frac{(0-1)^2}{\delta(1)}=-1,
\qquad
A_{001,012}=\frac{(0-2)^2}{\delta(2)}=2.
$$
Table~\ref{tab:matrix} lists nonzero entries of $A$.

\begin{table}[htbp]
\centering
\small
\caption{The matrix $A$. An entry $\beta:w$ in row $\alpha$ means
$A_{\alpha,\beta}=w$. All unlisted entries are zero.}
\label{tab:matrix}
\begin{tabular}{c r@{\,}c@{\,}l @{\qquad} c r@{\,}c@{\,}l}
\toprule
Row & \multicolumn{3}{c}{Column: entry}
    & Row & \multicolumn{3}{c}{Column: entry} \\
\midrule
$001$ & $011$ & $:$ & $-1$  & $112$ & $120$ & $:$ & $1/2$ \\
      & $012$ & $:$ & $2$   &       & $122$ & $:$ & $1/2$ \\
\addlinespace[3pt]
$002$ & $021$ & $:$ & $-1$  & $120$ & $200$ & $:$ & $1$   \\
      & $022$ & $:$ & $2$   &       &       &     &       \\
\addlinespace[3pt]
$011$ & $112$ & $:$ & $1$   & $122$ & $220$ & $:$ & $1$   \\
\addlinespace[3pt]
$012$ & $122$ & $:$ & $1$   & $200$ & $001$ & $:$ & $1$   \\
\addlinespace[3pt]
$021$ & $211$ & $:$ & $1$   & $201$ & $011$ & $:$ & $1$   \\
\addlinespace[3pt]
$022$ & $221$ & $:$ & $1$   & $210$ & $100$ & $:$ & $1$   \\
\addlinespace[3pt]
$100$ & $002$ & $:$ & $1$   & $211$ & $110$ & $:$ & $1$   \\
\addlinespace[3pt]
$102$ & $022$ & $:$ & $1$   & $220$ & $200$ & $:$ & $2$   \\
      &       &     &       &       & $201$ & $:$ & $-1$  \\
\addlinespace[3pt]
$110$ & $100$ & $:$ & $1/2$ & $221$ & $210$ & $:$ & $2$   \\
      & $102$ & $:$ & $1/2$ &       & $211$ & $:$ & $-1$  \\
\bottomrule
\end{tabular}
\end{table}

Each proper $S$-colouring $\varphi$ of $G$ corresponds to a closed walk   $W_{\varphi}= (\gamma_0,\ldots,\gamma_{n-1})$ of length $n$ in $D$, where $\gamma_i=(\varphi(i), \varphi(i+1), \varphi(i+2))$. Conversely, each closed walk $W$ in $D$ of length $n$ equals $W_{\varphi}$ for a proper $S$-colouring $\varphi$ of $G$.  For a closed walk  $W_{\varphi}= (\gamma_0,\ldots,\gamma_{n-1})$ of length $n$ in $D$, 

\begin{equation}\label{eq:weightfactorization}
\begin{aligned}
A_{\gamma_0,\gamma_1}\cdots A_{\gamma_{n-1},\gamma_0}&= 
\prod_{i=0}^{n-1}
\frac{(\varphi(i)-\varphi(i+3))
      (\varphi(i+1)-\varphi(i+3))}
     {\delta(\varphi(i+3))}\\[4pt]
&=
\prod_{j=0}^{n-1}
\frac{(\varphi(j-2)-\varphi(j))
      (\varphi(j-3)-\varphi(j))}
     {\delta(\varphi(j))}\\[4pt]
&=\frac{P_G(\varphi)}{D(\varphi)}.
\end{aligned}
\end{equation}

If $(\gamma_0,\ldots,\gamma_{n-1})$ is not a closed walk, then $A_{\gamma_0,\gamma_1}\cdots A_{\gamma_{n-1},\gamma_0}=0$. Therefore,

\begin{equation}
   \label{eq:traceeq} 
   \begin{aligned}
\left[\prod_{i=0}^{n-1}x_i^2\right]P_G(\mathbf x)
&=\sum_{\varphi:V(G)\to S}
  \frac{P_G(\varphi)}{D(\varphi)}\\[4pt]
&=\sum_{\varphi\text{ proper}}
  \frac{P_G(\varphi)}{D(\varphi)}\\[4pt]
&=\sum_{\gamma_0,\ldots,\gamma_{n-1}\in\Omega}
  A_{\gamma_0,\gamma_1}\cdots A_{\gamma_{n-1},\gamma_0}\\[4pt]
&=\operatorname{tr}(A^n).
\end{aligned}
\end{equation}

\begin{lemma}\label{lem:recurrence}
The matrix $A$ satisfies
\begin{equation}\label{eq:annihilator}
A^9-3A^4+A^3+I_{18}=0.
\end{equation}
For $j\ge0$, put $u_j=\operatorname{tr}(A^j)/6$.
Then all $u_j$ are integers,
\begin{equation}\label{eq:urecurrence}
u_j=3u_{j-5}-u_{j-6}-u_{j-9}\qquad(j\ge9),
\end{equation}
and $(u_0,\ldots,u_8)=(3,0,0,0,0,5,-2,0,0)$.
\end{lemma}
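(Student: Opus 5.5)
Since $A$ is an explicit $18\times 18$ matrix, I will prove the lemma by exploiting its sparsity and the block structure of $D$ rather than by computing $A^9$ entry by entry.

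\textbf{Strategy for \eqref{eq:annihilator}.} Rather than compute the full characteristic polynomial directly, I will exhibit the structure of $D$. Every vertex has out-degree $1$ or $2$, and the vertices of out-degree $2$ are $001,002,110,112,220,221$. These are exactly the states $a_0a_0a_1$ (first two entries equal), and they are permuted by the colour symmetries. I will compress the walks of $D$ into ``excursions'' between consecutive visits to this set $R$ of six branching states. Following the unique out-arcs from each successor, for example $001\to011\to112$ and $001\to012\to122\to220$, I will record for each pair $\rho,\rho'\in R$ a generating polynomial $M_{\rho\rho'}(z)=\sum_k (\text{weight of the path of length }k)\,z^k$. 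The vertices $021,211,110$ and the like are handled in the same way.

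Standard transfer-matrix reasoning then gives $\det(I-zA)=\det(I_6-M(z))$, up to the contribution of the non-branching vertices, which is trivial because they lie on no cycle avoiding $R$. Using the $S_3$ colour symmetry, I will reduce $M(z)$ further. Note that the weights are not invariant under all colour permutations, since $\delta$ distinguishes $1$. They are invariant under the swap $0\leftrightarrow 2$, though, and I will use that. The target is
\[
\det(I-zA)=(1-3z^5+z^6+z^9)^{2}\cdot(\text{factors})
\]
or a similar factorisation, from which the minimal polynomial is read off. Concretely, I expect the minimal polynomial to be $x^9-3x^4+x^3+1$ with multiplicity two, which matches $\operatorname{tr}(A^0)=18=6\cdot 3$ only if further factors appear.

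As a fallback, since this is a finite exact identity over $\mathbb Q$, I will note that it can be certified by direct computation of $A^3,A^4,A^9$ from Table~\ref{tab:matrix}. Sparsity makes this feasible: each row of $A$ has at most two nonzero entries.

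\textbf{Recurrence \eqref{eq:urecurrence} and initial values.} Multiplying \eqref{eq:annihilator} by $A^{j-9}$ and taking traces gives $\operatorname{tr}A^j-3\operatorname{tr}A^{j-5}+\operatorname{tr}A^{j-6}+\operatorname{tr}A^{j-9}=0$, which is \eqref{eq:urecurrence} after dividing by $6$. For $(u_0,\dots,u_8)$ I count closed walks of lengths $0$ through $8$ in $D$ with their weights. A closed walk of length $k$ corresponds to a proper $S$-colouring of the ``circulant'' on $\mathbb Z_k$ with connection set $\{2,3\}$, counted with the weight $P/D$.
\begin{itemize}
\item $\operatorname{tr}A^0=18$, so $u_0=3$.
\item Lengths $1$ through $4$ have no closed walks. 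I will check this by showing the cycles of $D$ all have length at least $5$, for instance the cycle $001\to012\to120\to200\to001$ has length $4$? I will recheck this, since $012\to122$, not $120$.
\item For lengths $5$ and $6$, I will list the cycles explicitly, such as $001\to011\to112\to120\to200\to001$, compute their weights, and sum over rotations.
\item Lengths $7$ and $8$ will be handled similarly.
\end{itemize}

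\textbf{Integrality.} The $u_j$ are integers by induction from the recurrence, since it has integer coefficients and leading coefficient $1$ and the initial values are integers.

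\textbf{Main obstacle.} The hard part is verifying \eqref{eq:annihilator} itself. I would first attempt the excursion or transfer-matrix reduction. If the algebra gets messy, I would instead verify $A^9v=(3A^4-A^3-I)v$ on the $18$ standard basis vectors. Propagating a basis vector through the sparse graph is quick, and this makes the check mechanical.
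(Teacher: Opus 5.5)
Your fallback, certifying the identity by exact computation (for instance by pushing the $18$ basis vectors through the sparse matrix), is exactly the paper's proof. The paper just reports an exact rational computation, with code in the appendix. Your derivation of the trace recurrence and of integrality also coincides with the paper's.

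The transfer-matrix route you list first has a genuine logical gap as stated: $\det(I-zA)$ determines the characteristic polynomial, not the minimal polynomial. Suppose you establish $\det(I-zA)=(1-3z^5+z^6+z^9)^2$, i.e.\ $\chi_A=f^2$ with $f(x)=x^9-3x^4+x^3+1$. Cayley--Hamilton then yields only $f(A)^2=0$. Concluding $f(A)=0$ needs an argument excluding nontrivial Jordan blocks, which the determinant cannot supply. The determinant does suffice for the trace recurrence, since traces depend only on eigenvalues. Newton's identities then give $\operatorname{tr}(A^j)=2p_j$ with $(p_0,\dots,p_8)=(9,0,0,0,0,15,-6,0,0)$, so you would not need to enumerate closed walks. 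Your multiplicity remark is also confused: there are no further factors, because $2\cdot 9=18$ already accounts for every eigenvalue.

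What repairs the route is that $D$ is disconnected. In a state, the nonzero ones among the steps $a_1-a_0$ and $a_2-a_1$ agree modulo $3$, and this common value is preserved along arcs. So $D$ is the disjoint union of two $9$-vertex pieces: one contains the branching states $001,112,220$, the other contains $002,110,221$. The weight-preserving swap $0\leftrightarrow 2$ exchanges the two pieces, so $A$ is permutation-similar to $A_1\oplus A_1$. In each piece the non-branching vertices form an acyclic subgraph, so $\det(I-zA_1)=\det(I-M_1(z))$. In the order $001,112,220$,
\[
M_1(z)=\begin{pmatrix}0&-z^2&2z^3\\ z^3/2&0&z^2/2\\ 2z^2&-z^3&0\end{pmatrix},\qquad \det\bigl(I-M_1(z)\bigr)=1-3z^5+z^6+z^9 .
\]
Hence $\chi_{A_1}=f$, and Cayley--Hamilton applied to each $9\times 9$ block gives $A^9-3A^4+A^3+I_{18}=0$.

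Completed this way, your approach is a short proof that can be checked by hand and also explains where the recurrence comes from. The paper's argument, by contrast, rests entirely on machine verification.
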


\begin{proof}
Exact rational computation of the powers of the matrix in
Table~\ref{tab:matrix} gives~\eqref{eq:annihilator} and
$$
(\operatorname{tr}(A^j))_{j=0}^{8}
=(18,0,0,0,0,30,-12,0,0),
$$
where $A^0=I_{18}$.
Multiplying~\eqref{eq:annihilator} by $A^{j-9}$ and taking traces
gives~\eqref{eq:urecurrence} after division by $6$.
The initial values are integers, so the recurrence proves
integrality for all $j$. The verification code is given in Appendix A.
\end{proof}

\begin{lemma}\label{lem:nonzero}
For every integer $m\ge0$, $u_{6m}\not\equiv0\pmod9$.
\end{lemma}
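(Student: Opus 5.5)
The approach is to reduce the integer recurrence \eqref{eq:urecurrence} modulo $9$, where it becomes a linear recurrence over $\mathbb Z/9\mathbb Z$. Its state vector $(u_j,\ldots,u_{j+8})$ takes at most $9^9$ values. Because the constant coefficient of the recurrence is $-1$, a unit, the step map on state vectors is invertible. So the sequence $(u_j \bmod 9)$ is purely periodic. The plan is to find this period $T$ explicitly. Then it suffices to check $u_{6m}\not\equiv 0\pmod 9$ for $6m$ ranging over one full period of residues modulo $T$, that is, for $m$ in $0,\ldots,T/\gcd(T,6)-1$.

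The key steps, in order, are these.

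First, compute $u_j \bmod 9$ from the initial values $(3,0,0,0,0,5,-2,0,0)$ using $u_j=3u_{j-5}-u_{j-6}-u_{j-9}$.

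Second, locate the first $T>0$ with $(u_T,\ldots,u_{T+8})\equiv(u_0,\ldots,u_8)\pmod 9$.

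Third, read off the residues $u_{6m}$ over one period and verify that none is $0$.

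To keep this a human-checkable proof rather than a brute-force search, I would first work modulo $3$. There the recurrence is $u_j\equiv -u_{j-6}-u_{j-9}$, with characteristic polynomial $x^9+x^3+1$ over $\mathbb F_3$. Since $x^9+x^3+1=(x^3+x+1)^3$ in characteristic $3$, the period modulo $3$ is small. This helps explain the structure modulo $9$: the period modulo $9$ divides $3$ times the period modulo $3$.

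An alternative, more structural route uses the characteristic polynomial $f(x)=x^9-3x^4+x^3+1$ directly, so that $u_{6m}$ is a power sum $\frac16\sum\lambda^{6m}$ over roots counted with the multiplicities from $A$. Reducing modulo $3$ gives $f\equiv x^9+x^3+1=(x^3+x+1)^3$, and one could try to show $u_{6m}\equiv$ a constant nonzero value modulo $9$ (perhaps $u_{6m}\equiv 3$ or $6$, given $u_0=3$ and $u_6=-2$). Note that $u_6=-2\equiv 7$, so the residues need not be multiples of $3$. Hence the invariant is likely a short periodic pattern rather than a constant, and I would rely on the explicit period computation.

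The main obstacle is the second step: determining the period modulo $9$ and presenting it convincingly. Since the state space is large in principle, I expect the period to be modest, on the order of a few hundred at most, given the factorization modulo $3$. The first thing I would try is computing the period $T_3$ modulo $3$ from the sequence $x^3+x+1$ generates (order $13$ or $26$ in $\mathbb F_{27}^*$, times $3$ for the cube multiplicity). I would then check that the sequence modulo $9$ has period dividing $3T_3$, and tabulate $u_{6m}\bmod 9$ over that range, as in Appendix A.
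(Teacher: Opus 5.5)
Your plan is correct, but it takes a more brute-force route than the paper. The coefficient of $u_{j-9}$ is $-1$, a unit modulo $9$. So the step map on states $(u_j,\ldots,u_{j+8}) \bmod 9$ is a bijection, the residues are purely periodic, and checking one period proves the lemma.

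The paper does not compute the period of the nine-term recurrence. It uses your own observation, that only the shifts $6$ and $9$ survive modulo $3$, to \emph{decouple} the residue classes of $j$ modulo $3$:
\begin{itemize}
\item From $u_1=u_4=u_7=0$, induction gives $u_{3k+1}\equiv 0\pmod 3$ for all $k$.
\item Hence the cross term $3u_{3k+4}$ vanishes modulo $9$.
\item So $w_k=u_{3k}$ satisfies the three-term recurrence $w_{k+3}\equiv -w_{k+1}-w_k\pmod 9$. Its characteristic polynomial is $y^3+y+1$ with $y=x^3$, which is your $x^9+x^3+1$ read in the variable $x^3$.
\end{itemize}
A table of $27$ residues with period $24$ then finishes the proof. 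Your route reaches the same twelve residues $u_0,u_6,\ldots,u_{66}$, but needs about $80$ terms of a nine-term recurrence and a comparison of nine-dimensional states. It spares you from finding the decoupling; the paper's version is short enough to check by hand.

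One structural claim in your proposal is false: $x^3+x+1$ is not irreducible over $\mathbb F_3$, because it vanishes at $x=1$. In fact $x^3+x+1=(x-1)(x^2+x+2)$, and the roots of $x^2+x+2$ have order $8$ in $\mathbb F_9^*$. So there are no roots of order $13$ or $26$ in $\mathbb F_{27}^*$. The order of $x^9+x^3+1=(x-1)^3(x^2+x+2)^3$ over $\mathbb F_3$ is $\operatorname{lcm}(3,24)=24$. Hence the period of $u_j \bmod 9$ divides $72$, and in fact equals $72$; it is not of size $117$ or $234$. You therefore only need $m=0,\ldots,11$, matching the twelve residues checked in the paper. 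Since you planned to verify the period explicitly, this does not break your method, but the period estimate you proposed to rely on is wrong.
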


\begin{proof}
Reducing~\eqref{eq:urecurrence} modulo $3$ gives
$u_j\equiv-u_{j-6}-u_{j-9}\pmod3$.
The initial terms $u_1,u_4,u_7$ are zero.
Induction on indices congruent to $1$ modulo $3$ gives
$u_{3k+1}\equiv0\pmod3$ for every $k\ge0$.

Put $w_k=u_{3k}$. Taking $j=3k+9$ in~\eqref{eq:urecurrence},
we obtain
\begin{equation}\label{eq:vrecurrence}
w_{k+3}=3u_{3k+4}-w_{k+1}-w_k
\equiv-w_{k+1}-w_k\pmod9,
\end{equation}
since $u_{3k+4}\equiv0\pmod3$.
The initial triple is $(w_0,w_1,w_2)\equiv(3,0,7)\pmod9$.
Successive applications of~\eqref{eq:vrecurrence} give
\begin{center}\small
\begin{tabular}{c|rrrrrrrrrrrr}
$k$&0&1&2&3&4&5&6&7&8&9&10&11\\\hline
$w_k\bmod9$&3&0&7&6&2&5&1&2&3&6&4&0\\[4pt]
$k$&12&13&14&15&16&17&18&19&20&21&22&23\\\hline
$w_k\bmod9$&8&5&1&5&3&3&1&3&5&5&1&8
\end{tabular}
\end{center}
The next three residues are $(w_{24},w_{25},w_{26})\equiv(3,0,7)\pmod9$.
By~\eqref{eq:vrecurrence}, three consecutive residues determine
all subsequent residues.
Hence $w_{k+24}\equiv w_k\pmod9$ for every $k\ge0$.
The twelve even positions of the displayed block have residues
$3,7,2,1,3,4,8,1,3,1,5,1$, none of which is zero.
Since $u_{6m}=w_{2m}$, the result follows.
\end{proof}

\begin{proof}[Proof of Theorem~\ref{thm:c6m}]
For $m\ge2$, equation \eqref{eq:traceeq} and Lemma~\ref{lem:nonzero} give
$$
\left[\prod_{i=0}^{6m-1}x_i^2\right]
P_{C_{6m}(2,3)}(\mathbf x)=6u_{6m}\ne0.
$$
Thus $\AT(C_{6m}(2,3))\le3$.
Since $C_{6m}(2,3)$ is $4$-regular, its Alon--Tarsi number is
at least $3$, and the result follows.
\end{proof}

\subsection{The proof of Theorem~\ref{thm:c13}}\label{subsec:cn13}

Our proof is motivated by Tarsi's work on the relation between graph polynomials and the number of proper $3$-colourings \cite{Tarsi1999}. Let $\omega$ be a primitive cube root of unity
and put $S=\{1,\omega,\omega^2\}$. In this subsection, we apply Lemma \ref{lem:coefficient} over $\mathbb C$.

\begin{lemma}\label{lem:roots-four-cycle}
If $x,y$ are distinct elements of $S$, then 

\begin{equation}
    \label{eq:square relation}
    (x-y)^2=-3xy.
\end{equation}
If $a,b,c,d\in S$ satisfy
$a\ne b$, $b\ne c$, $c\ne d$, and $d\ne a$, then
\begin{equation}
    \label{eq:triple relation}
    (a-b)(b-c)(c-d)=3bc(a-d).
\end{equation}
\end{lemma}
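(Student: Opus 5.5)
The plan is to use only the arithmetic of cube roots of unity, via the identity $\omega^2+\omega+1=0$.

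For \eqref{eq:square relation}, let $x,y\in S$ be distinct. Then $y=x\omega^{\pm1}$, so $\{x,y,\,x^2/y\}$, or more simply $\{x,y,z\}$ with $z$ the third element of $S$, is all of $S$. Hence $x+y+z=0$ and $xyz=1$. Expanding gives
$$(x-y)^2=x^2-2xy+y^2=(x+y)^2-4xy=z^2-4xy.$$
Since $z=1/(xy)$ and $(xy)^3=1$, we have $z^2=(xy)^{-2}=xy$. Therefore $(x-y)^2=xy-4xy=-3xy$. Alternatively, one can divide by $xy$ and set $u=y/x\in\{\omega,\omega^2\}$; the claim becomes $(1-u)^2=-3u$, i.e.\ $u^2+u+1=0$, which holds.

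For \eqref{eq:triple relation}, I will first multiply the left side by $(b-c)$ and apply \eqref{eq:square relation} to the pair $b\ne c$:
$$(a-b)(b-c)^2(c-d)=-3bc\,(a-b)(c-d).$$
So it suffices to show $-(a-b)(c-d)=(b-c)(a-d)$, since $b-c\ne0$ allows us to cancel. That identity is equivalent to
$$(a-b)(c-d)+(b-c)(a-d)=0.$$
Expanding, the left side equals $ac-ad-bc+bd+ab-bd-ac+cd=-ad-bc+ab+cd=(a-c)(b-d)$. The task therefore reduces to showing $(a-c)(b-d)=0$, i.e.\ $a=c$ or $b=d$.

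This last step is the only non-formal one, and it is a pigeonhole argument on the $4$-cycle $a,b,c,d$ coloured from the $3$-element set $S$ with adjacent entries distinct. Suppose $a\ne c$. Then $a,b,c$ are pairwise distinct (using $a\ne b$ and $b\ne c$), so $\{a,b,c\}=S$. Since $d\ne c$ and $d\ne a$, we must have $d=b$. Thus $(a-c)(b-d)=0$, which completes \eqref{eq:triple relation}.
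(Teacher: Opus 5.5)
Your proof is correct. It rests on the same two facts as the paper's argument. The first is $x^2+xy+y^2=0$ for distinct elements of $S$; the paper obtains it by factoring $x^3-y^3=0$, and your ratio version $u^2+u+1=0$ is the same fact. The second is the pigeonhole observation that $a=c$ or $b=d$.

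The only difference is how you organize the second identity. The paper splits into the cases $a=c$ and $b=d$ and applies $(x-y)^2=-3xy$ directly to $(a-b)^2$ or $(b-c)^2$ in each case. You instead multiply through by $b-c\neq 0$, apply the square relation once, and reduce to the polynomial identity $(a-b)(c-d)+(b-c)(a-d)=(a-c)(b-d)$. This avoids the case split and shows exactly why the condition ``$a=c$ or $b=d$'' is what the identity needs. The paper's case analysis is a little shorter and involves no cancellation step.
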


\begin{proof}
For distinct $x,y\in S$, as
$0=(x^3-y^3) = (x-y)(x^2+xy+y^2)$, we have $x^2+xy+y^2=0$, and hence $(x-y)^2=-3xy$.
Assume $a,b,c,d\in S$ satisfy
$a\ne b$, $b\ne c$, $c\ne d$, and $d\ne a$.
Since $S$ has three elements,
either $a=c$ or $b=d$.

If $a=c$, then
\[
(a-b)(b-c)(c-d)
=-(a-b)^2(a-d)
=3ab(a-d)
=3bc(a-d).
\]
If $b=d$, then
\[
(a-b)(b-c)(c-d)
=-(a-b)(b-c)^2
=3bc(a-b)
=3bc(a-d).
\]
\end{proof}

\begin{proof}[Proof of Theorem \ref{thm:c13}]
Let $n\ge7$, and let
$\varphi:V(C_n(1,3))\to S$ be a proper colouring.

Let $z_i=\varphi(i)$, $Z=\prod_{i=0}^{n-1}z_i$, and
$L=\prod_{i=0}^{n-1}(z_i-z_{i+1})$. Then 
\[
P_{C_n(1,3)}(\varphi)
=
L\prod_{i=0}^{n-1}(z_i-z_{i+3}).
\]

By equation \eqref{eq:triple relation}, we have
\[
(z_i-z_{i+1})(z_{i+1}-z_{i+2})(z_{i+2}-z_{i+3})
=
3z_{i+1}z_{i+2}(z_i-z_{i+3}).
\]
Multiplying over $i\in\mathbb Z_n$, we obtain
\[
L^3
=
3^nZ^2\prod_{i=0}^{n-1}(z_i-z_{i+3}).
\]
By equation \eqref{eq:square relation}, we have $(z_i-z_{i+1})^2=-3z_iz_{i+1}$, so
$L^2=(-3)^nZ^2$. Therefore
\[
P_{C_n(1,3)}(\varphi)
=
L\prod_{i=0}^{n-1}(z_i-z_{i+3})
=
\frac{L^4}{3^nZ^2}
=
3^nZ^2.
\]
For $z\in S$, differentiating
$\prod_{a\in S}(x-a)=x^3-1$ at $x=z$ gives
\[
\prod_{a\in S\setminus\{z\}}(z-a)=3z^2.
\]
Thus  \[
\frac{P_{C_n(1,3)}(\varphi)}
{\displaystyle
 \prod_{i=0}^{n-1}
 \prod_{a\in S\setminus\{\varphi(i)\}}
 (\varphi(i)-a)}
=1.
\]

Applying Lemma \ref{lem:coefficient},
$$
\left[\prod_{i=0}^{n-1}x_i^2\right]
P_{C_n(1,3)}(\mathbf x)
=
\sum_{\varphi:V(C_n(1,3))\to S}
\frac{P_{C_n(1,3)}(\varphi)}
{\displaystyle
 \prod_{i=0}^{n-1}
 \prod_{a\in S\setminus\{\varphi(i)\}}
 (\varphi(i)-a)}=\sum_{\varphi\text{ proper}}
  1=N_3(C_n(1,3)).
$$

If $n$ is even, assign colours alternately by the word
$(01)^{n/2}$. This colouring is proper since both distance lengths
are odd.

If $n$ is odd, write $n=2k+5$. If $k\ge2$, then 
$
(01)^k21202
$
is a proper colouring of $C_n(1,3)$. Hence $\AT(C_n(1,3))=3$ for $n \ge 8$. 
\end{proof}

\bibliographystyle{amsplain}
\bibliography{references}

\providecommand{\bysame}{\leavevmode\hbox to3em{\hrulefill}\thinspace}
\providecommand{\MR}{\relax\ifhmode\unskip\space\fi MR }
\providecommand{\MRhref}[2]{%
  \href{http://www.ams.org/mathscinet-getitem?mr=#1}{#2}
}
\providecommand{\href}[2]{#2}
\begin{thebibliography}{10}

\bibitem{AlonTarsi1992}
Noga Alon and Michael Tarsi, \emph{Colorings and orientations of graphs},
  Combinatorica \textbf{12} (1992), no.~2, 125--134.

\bibitem{CyclesColourings2025}
J{\'a}nos Bar{\'a}t, Zden{\v{e}}k Dvo{\v{r}}{\'a}k, Penny Haxell,
  Franti{\v{s}}ek Kardo{\v{s}}, Borut Lu{\v{z}}ar, Alfr{\'e}d Onderko, Jozef
  Rajn{\'i}k, Roman Sot{\'a}k, and Nikolay Ulyanov, \emph{Open problems of the
  {33rd Workshop on Cycles and Colourings}}, 2025.

\bibitem{CLZ1999}
Gerard~J. Chang, Daphne D.-F. Liu, and Xuding Zhu, \emph{Distance graphs and
  {$T$}-coloring}, J. Combin. Theory Ser. B \textbf{75} (1999), no.~2,
  259--269. \MR{1676892}

\bibitem{FleischnerStiebitz1992}
Herbert Fleischner and Michael Stiebitz, \emph{A solution to a colouring
  problem of {P}. {E}rd{\H{o}}s}, Discrete Mathematics \textbf{101} (1992),
  no.~1--3, 39--48.

\bibitem{Heubergerchromaticofcirculant}
Clemens Heuberger, \emph{On planarity and colorability of circulant graphs},
  Discrete Mathematics \textbf{268} (2003), no.~1--3, 153--169.

\bibitem{HladkyKralSchauz2010}
Jan Hladk{\'y}, Daniel Kr{\'a}l', and Uwe Schauz, \emph{{B}rooks' theorem via
  the {A}lon--{T}arsi theorem}, Discrete Mathematics \textbf{310} (2010),
  no.~23, 3426--3428.

\bibitem{AT-ToroidalGrids2023}
Zhiguo Li, Zeling Shao, Fedor Petrov, and Alexey Gordeev, \emph{The
  {A}lon-{T}arsi number of a toroidal grid}, European J. Combin. \textbf{111}
  (2023), Paper No. 103697, 7, With a preface by Xuding Zhu. \MR{4602837}

\bibitem{LiYeShao2023}
Zhiguo Li, Qing Ye, and Zeling Shao, \emph{The {A}lon--{T}arsi number of two
  kinds of planar graphs}, Electronic Journal of Graph Theory and Applications
  \textbf{11} (2023), no.~1, 155--164.

\bibitem{Lonelyrunner2025}
Guillem Perarnau and Oriol Serra, \emph{The lonely runner conjecture turns 60},
  Comput. Sci. Rev. \textbf{58} (2025), Paper No. 100798, 13. \MR{4943662}

\bibitem{Schauz2010}
Uwe Schauz, \emph{A paintability version of the {C}ombinatorial
  {N}ullstellensatz, and list colorings of $k$-partite $k$-uniform
  hypergraphs}, Electronic Journal of Combinatorics \textbf{17} (2010), no.~1,
  R176.

\bibitem{Tarsi1999}
Michael Tarsi, \emph{The graph polynomial and the number of proper vertex
  coloring}, Annales de l'Institut Fourier \textbf{49} (1999), no.~3,
  1089--1093.

\bibitem{ZhuBalakrishnan2022}
Xuding Zhu and R.~Balakrishnan, \emph{Combinatorial nullstellensatz: With
  applications to graph colouring}, 1st ed., CRC Focus, Chapman and Hall/CRC,
  Boca Raton, 2022.

\end{thebibliography}

\newpage

\appendix

\section{Exact verification of the matrix calculation}

The following Python code uses rational arithmetic.
It constructs the $18\times18$ matrix $A$ directly
from the transition rule and verifies
$A^9-3A^4+A^3+I_{18}=0$, the initial traces, and
the residue table used in the proof.
The recurrence and the argument extending the modular
period to all indices are given in the main text.

\begin{verbatim}
from fractions import Fraction as F
from itertools import product

S = range(3)
O = [a for a in product(S, repeat=3) if a[0] != a[2]]
idx = {a: i for i, a in enumerate(O)}
A = [[F(0) for _ in O] for _ in O]

for i, (a, b, c) in enumerate(O):
    for d in S:
        if d not in (a, b):
            A[i][idx[(b, c, d)]] = F(
                (a-d)*(b-d), (2, -1, 2)[d])

def mul(X, Y):
    return [
        [sum(X[i][k]*Y[k][j] for k in range(18))
         for j in range(18)]
        for i in range(18)
    ]

P = [[[F(i == j) for j in range(18)]
      for i in range(18)]]
for k in range(9):
    P.append(mul(P[-1], A))

assert all(
    P[9][i][j] - 3*P[4][i][j]
    + P[3][i][j] + P[0][i][j] == 0
    for i in range(18) for j in range(18)
)

t = [sum(X[i][i] for i in range(18)) for X in P[:9]]
assert t == [18, 0, 0, 0, 0, 30, -12, 0, 0]

v = [3, 0, 7]
for k in range(24):
    v.append((-v[k+1] - v[k]) % 9)

assert v[:24] == [
    3, 0, 7, 6, 2, 5, 1, 2, 3, 6, 4, 0,
    8, 5, 1, 5, 3, 3, 1, 3, 5, 5, 1, 8
]
assert v[24:27] == v[:3]
assert all(v[k] != 0 for k in range(0, 24, 2))

print("All checks passed.")
\end{verbatim}

\end{document}